\declaretheorem[numbered=no,name=Chernoff Bound]{chernoff}
\declaretheorem[name=Theorem]{theorem}
\declaretheorem[name=Lemma,numberlike=theorem]{lemma}
\declaretheorem[name=Observation,numberlike=theorem]{observation}
\declaretheorem[name=Corollary,numberlike=theorem]{corollary}
\title{Increasing paths in edge-ordered graphs: \\the hypercube and random graph\thanks{Research supported in part by NSF grant DMS-1427526, "The Rocky Mountain-Great Plains Graduate Research
Workshop in Combinatorics".}}
\author{
Jessica De Silva
\thanks{Department of Mathematics, University of Nebraska, Lincoln, NE, USA, \texttt{jessica.desilva@huskers.unl.edu}. Research is supported by the National Science Foundation Graduate Research Fellowship Program under Grant No. DGE-1041000.}
\and
Theodore Molla
\thanks{Department of Mathematics, University of Illinois at Urbana-Champaign, Urbana, IL, USA, \texttt{molla@illinois.edu}.}
\and
Florian Pfender
\thanks{Department Of Mathematical \& Statistical Sciences, University of Colorado Denver, Denver, CO, USA,
\texttt{florian.pfender@ucdenver.edu.} Research is partially supported by a collaboration grant from the Simons Foundation.}
\and
Troy Retter
\thanks{Department of Mathematics \& Computer Science, Emory University, Atlanta, GA, USA,
\texttt{tretter@emory.edu.}}
\and
Michael Tait
\thanks{Department of Mathematics, University of California, San Diego, CA, USA, \texttt{mtait@ucsd.edu}.}
}
\date{}
\begin{document}
\maketitle


\abstract{An \emph{edge-ordering} of a graph $G=(V,E)$ is a bijection $\phi:E\to\{1,2,\ldots,|E|\}$. Given an edge-ordering, a sequence of edges $P=e_1,e_2,\ldots,e_k$ is an \emph{increasing path} if it is a path in $G$ which satisfies $\phi(e_i)<\phi(e_j)$ for all $i<j$. For a graph $G$, let $f(G)$ be the largest integer $\ell$ such that every edge-ordering of $G$ contains an increasing path of length $\ell$. The parameter $f(G)$ was first studied for $G=K_n$ and has subsequently been studied for other families of graphs. This paper gives bounds on $f$ for the hypercube and the random graph $G(n,p)$.}


\section{Introduction}

An \emph{edge-ordering} of a graph $G=(V,E)$ is a bijection $\phi:E\to\{1,2,\ldots,|E|\}$. Given a graph $G$ and an edge-ordering $\phi$, a sequence of edges $P=e_1,e_2,\ldots,e_k$ is an \emph{increasing path} (of length $k$) if it is a path in $G$ which satisfies $\phi(e_i)<\phi(e_j)$ for all $i<j$. Let $\psi(G,\phi)$ denote the length of the longest increasing path in $G$ with edge-ordering $\phi$. We define
\[
f(G):= \min_{\phi} \psi(G,\phi),
\]
where the minimum is taken over all edge-orderings $\phi$ of $G$. Hence $f(G) \geq \ell$ if every edge-ordering of $G$ contains an increasing path of length $\ell$ and $f(G) < \ell$ if there exists an edge-ordering of $G$ that does not have an increasing path of length $\ell$.

The parameter $f$ was first introduced in 1971 by Chv\'{a}tal and Koml\'{o}s \cite{CK}, who raised the question of estimating $f(K_n)$. Two years later, Graham and Kleitman \cite{GK} established that
\begin{equation}\label{graham and kleitman bound}
\frac{1}{2}\left(\sqrt{4n-3} - 1 \right) \leq f(K_n) \leq \frac{3}{4}n,
\end{equation}
and conjectured $f(K_n)$ should be closer to the upper bound. The upper bound in \eqref{graham and kleitman bound} was improved by Alspach, Heinrich, and Graham to $\frac{7}{12}n$ (unpublished, see \cite{CCS}). Finally, Calderbank, Chung, and Sturtevant \cite{CCS} proved in 1984 that
\[
f(K_n) < \left(\frac{1}{2}+o(1)\right)n.
\]
The sizable remaining gap between this upper bound and the lower bound in \eqref{graham and kleitman bound} has not been improved in the last 40 years.

Although progress has not been made for the complete graph in some time, $f$ has recently been investigated for other classes of graphs. In 2010, Roditty, Shoham, and Yuster \cite{RSY} gave bounds on $f$ for some classes of graphs including trees and planar graphs. In the same year, Katreni\v{c} and Semani\v{s}in \cite{KS} showed that computing $f$ is $NP$-hard in general and that deciding if there is an increasing Hamiltonian path given an edge-ordering is $NP$-complete. In 2001, Yuster \cite{Y} and Alon \cite{A} considered the problem of maximizing $f(G)$ where $G$ ranges over all graphs of maximum degree $d$. Current research by Lavrov and Loh \cite{LL} considers a probabilistic variant that asks for the length of the longest increasing path likely to be present in a random edge-ordering of the complete graph.

This paper contributes to the work on the parameter $f$ by studying it for the hypercube and the random graph $G(n,p)$. We will prove a pair of general lemmas and a pair of resulting theorems.

Before stating these results, however, we make the following (likely well-known) observation.

\begin{observation} \label{lowerBound}
For any graph $G$, let $\chi'(G)$ denote the edge chromatic number of $G$, i.e., the number of matchings needed to cover the edge set of $G$. Then $$f(G) \leq \chi'(G).$$
\end{observation}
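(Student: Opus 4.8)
The plan is to exhibit a single edge-ordering $\phi$ of $G$ whose longest increasing path has at most $\chi'(G)$ edges; this suffices since $f(G)=\min_\phi \psi(G,\phi)$. First I would fix a proper edge-coloring of $G$ with $k:=\chi'(G)$ colors, i.e.\ a partition $E(G)=M_1\cup M_2\cup\cdots\cup M_k$ into matchings (a cover by $k$ matchings can always be refined to such a partition without increasing the number of parts). Then I would take $\phi$ to be any bijection that is monotone with respect to this partition: every edge of $M_i$ receives a label smaller than every edge of $M_j$ whenever $i<j$, the order within each $M_i$ being irrelevant.

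Next I would analyze an arbitrary increasing path $P=e_1,e_2,\ldots,e_\ell$ under $\phi$, letting $c(i)$ denote the index of the matching containing $e_i$. Two observations combine. First, since $\phi(e_i)<\phi(e_{i+1})$ and $\phi$ respects the block structure of the partition, we have $c(i)\le c(i+1)$. Second, $e_i$ and $e_{i+1}$ are consecutive edges of a path, hence share an endpoint, so they cannot both lie in the same matching; thus $c(i)\ne c(i+1)$. Together these force $c(1)<c(2)<\cdots<c(\ell)$, a strictly increasing sequence drawn from $\{1,\ldots,k\}$, so $\ell\le k$. Hence $\psi(G,\phi)\le \chi'(G)$, and the bound $f(G)\le\chi'(G)$ follows.

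There is essentially no serious obstacle here; the only point requiring care is the bookkeeping convention that a path of ``length $\ell$'' has $\ell$ edges, so that $k$ matchings permit an increasing path with at most $k$ edges. Equivalently, one can state the argument without colors: under this $\phi$ an increasing path meets the blocks $M_1,\ldots,M_k$ in nondecreasing order, and consecutive edges lie in distinct blocks, so the path uses at most one edge from each block.
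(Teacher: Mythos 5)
Your proof is correct and takes essentially the same approach as the paper: order the edges so that the color classes of a proper edge-coloring appear in blocks, and note that an increasing path can use at most one edge per matching. Your extra bookkeeping with the strictly increasing sequence $c(1)<\cdots<c(\ell)$ just makes explicit what the paper states in one line.
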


\begin{proof}
Let $G=(V,E)$ be any graph and $E = E_1 \cup E_2\cup \cdots\cup E_{\chi'(G)}$ be any proper edge-coloring of $G$.  Now consider any edge-ordering $\phi$ that has the property $\phi(e)<\phi(e') $ if $e \in E_i$ and $e' \in E_j$ for some $i < j$. That is, $\phi$ assigns the edges in $E_1$ the lowest values, the edges in $E_2$ the second lowest values, and so on. Because any increasing path in $\phi$ can use at most one edge from each $E_i$, $ \psi (G,\phi) \leq \chi'(G)$. Hence $f(G) \leq \chi'(G)$. In particular, Vizing's Theorem gives the bound $f(G) \leq \chi'(G) \leq \Delta(G)+1$.
\end{proof}

The following lemmas rely on the \emph{pedestrian algorithm}. The algorithm was initially presented as an idea of Friedgut in \cite{W} to count increasing walks and was altered in \cite{LL} for increasing paths. We defer both the statement of the pedestrian algorithm and the proofs of the lemmas to Section \ref{sec:lemmas}.

\begin{lemma} \label{sets}
Let $G$ be a graph and $k\in\mathbb{Z}^+$. If $f(G) < k$, there exist sets $V_1, V_2, \dots, V_n \subseteq V(G)$ such that $|V_i| \leq k$ and $\displaystyle E(G) \subseteq \cup_{i=1}^n E(G[V_i])$.
\end{lemma}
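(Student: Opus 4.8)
The plan is to read the required sets $V_1,\dots,V_n$ directly off the run of the pedestrian algorithm applied to a worst-case edge-ordering. Since $f(G)<k$, fix an edge-ordering $\phi$ of $G$ under which the longest increasing path has fewer than $k$ edges, hence spans at most $k$ vertices. Recall (this is the content of Section~\ref{sec:lemmas}) that the pedestrian algorithm places one pedestrian at every vertex of $G$ and reveals the edges one at a time in increasing $\phi$-order; each pedestrian carries with it an increasing path of $G$ ending at its current location, and the algorithm is arranged so that, at the step when an edge $e$ is revealed, $e$ is traversed by some pedestrian and is thereby incorporated into that pedestrian's path. The point to extract is that every path ever carried by a pedestrian is an increasing path of $G$ under $\phi$, so by the choice of $\phi$ it has at most $k$ vertices; in particular a pedestrian can never genuinely extend a $k$-vertex path, since appending the newly revealed (larger-labelled) edge to it would create an increasing path of length $k$.

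With this in hand, for each $i\in\{1,\dots,n\}$, where $n=|V(G)|$, I would set $V_i$ to be the collection of all vertices the $i$-th pedestrian ever occupies during the run, that is, the vertex set of the maximal increasing path it traverses. By the previous paragraph each $V_i$ has at most $k$ vertices. Moreover, every edge $e=uv$ of $G$ is, at the moment it is revealed, traversed by some pedestrian whose path then contains both $u$ and $v$; since a pedestrian only ever appends new vertices to its path or retreats to a prefix of it, both $u$ and $v$ belong to $V_i$ for that pedestrian $i$, so $e\in E(G[V_i])$. Hence $E(G)\subseteq\bigcup_{i=1}^n E(G[V_i])$, and the sets $V_1,\dots,V_n$ are exactly as required (one per pedestrian, and there are $n$ pedestrians).

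The main obstacle is everything packed into ``the algorithm is arranged so that'': one must show the invariant can actually be maintained. Concretely, when an edge $uv$ is revealed, with pedestrians $p$ at $u$ and $q$ at $v$ carrying increasing paths $P$ and $Q$, at least one of $p,q$ must be able to cross $uv$ while its path remains a genuine path, i.e.\ it cannot happen that $v\in V(P)$ and $u\in V(Q)$ simultaneously; establishing this requires exploiting the increasing property of $P$ and $Q$ together with the fact that $uv$ is revealed after all their edges, and this is the delicate step. One must also verify that when a pedestrian is forced to retreat to a prefix of its current path it visits no new vertex, so that the bound $|V_i|\le k$ and the ``paths only grow'' claim used above are both legitimate, and that the number of pedestrians (hence of sets) stays equal to $n$. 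Once the pedestrian algorithm is set up carefully in Section~\ref{sec:lemmas}, the deduction of Lemma~\ref{sets} is precisely the two short paragraphs above; an alternative purely inductive route (delete the $\phi$-smallest edge, apply the statement to the smaller graph, and graft the edge back into one of the returned sets) stumbles on keeping the number of sets equal to $n$ while keeping all sizes at most $k$, which is why the pedestrian framework is preferable.
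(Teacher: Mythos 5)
Your overall strategy --- run the pedestrian algorithm on an ordering witnessing $f(G)<k$ and take $V_i$ to be the set of vertices visited by the $i$-th pedestrian --- is exactly the paper's, and your size bound $|V_i|\le k$ is argued correctly. But the coverage step has a genuine gap: you assert that every edge $e=uv$ is, at the moment it is revealed, \emph{traversed} by some pedestrian, and you propose to justify this by showing that the pedestrian $p$ at $u$ and the pedestrian $q$ at $v$ cannot both be blocked, i.e.\ that one cannot have $v\in V(P)$ and $u\in V(Q)$ simultaneously. That claim is false. Take $V(G)=\{u,v,a,b\}$ with edges revealed in the order $va,\,ub,\,au,\,bv,\,uv$: after the first four steps the pedestrian that started at $v$ sits at $u$ having visited $\{v,a,u\}$, and the pedestrian that started at $u$ sits at $v$ having visited $\{u,b,v\}$, so when $uv$ is revealed each would revisit a vertex and neither can cross. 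Hence no version of the algorithm can guarantee that some pedestrian traverses every edge, and the ``delicate step'' you flag is not merely delicate but unprovable.

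The repair --- and the paper's actual argument --- is that traversal is not needed. In the paper's algorithm the two pedestrians at the endpoints of $e$ swap if and only if \emph{neither} would revisit a vertex; if the swap is blocked, it is blocked precisely because one of the two pedestrians has already visited the other endpoint of $e$, so that pedestrian's vertex set already contains both $u$ and $v$ and therefore $e\in E(G[V_i])$ for that $i$ even though $e$ lies on nobody's path. (In the example above, $uv$ is induced by both $\{v,a,u\}$ and $\{u,b,v\}$.) Note also that pedestrians never ``retreat to a prefix'' of their path in the paper's algorithm; at each step they either swap across the current edge or stay put, so each path only grows and each $V_i$ is simply the vertex set of one increasing path, giving $|V_i|\le k$ as you say. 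With your coverage paragraph replaced by this two-case analysis (either the swap happens and $e$ joins two paths, or it is blocked and the blocking pedestrian already induces $e$), the rest of your write-up goes through.
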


\begin{lemma} \label{upperBound}
Let $G$ be any connected graph with average degree $d$. For a positive integer $k$, define $\displaystyle \zeta_k := \max_{U \in {V(G) \choose k}} |E(G[U])|$. If $G$ and $k$ satisfy $2\zeta_k-k+1 < d $, then $f(G) \geq k$.
\end{lemma}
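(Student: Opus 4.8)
The plan is to prove the contrapositive: assuming $f(G) < k$, we will deduce $d \le 2\zeta_k - k + 1$. Note first that applying Lemma~\ref{sets} as a black box is not enough. It gives sets $V_1,\dots,V_n$ of size at most $k$ with $E(G)=\bigcup_i E(G[V_i])$, whence $|E(G)| \le \sum_i |E(G[V_i])| \le n\zeta_k$ and only $d \le 2\zeta_k$; recovering the extra additive $k-1$ requires opening up the \emph{pedestrian algorithm} that proves Lemma~\ref{sets} and reading off a more quantitative statement about it.

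Concretely, I would reveal the edges of $G$ in increasing $\phi$-order and, exactly as in the pedestrian algorithm, maintain for each vertex $v$ the length $\ell(v) \le k-1$ of the longest increasing path seen so far ending at $v$, together with a witnessing path $P(v)$ (so $|V(P(v))| \le k$). Call a newly revealed edge $e=uv$ \emph{productive} if processing it strictly increases $\ell(u)$ or $\ell(v)$, and \emph{unproductive} otherwise, and bound the two classes separately. For productive edges: a case analysis of the update rule (on whether $\ell(u)=\ell(v)$) should show that a productive edge raises the potential $\Phi := \sum_v \ell(v)$ by at least $2$; since $\Phi$ runs from $0$ to at most $(k-1)n$, there are at most $(k-1)n/2$ productive edges. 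For unproductive edges: if $e=uv$ is processed unproductively then its two endpoints already both lie on a common witnessing path $P(w)$ (the update is blocked precisely because extending would revisit a vertex), so $e$ lies in a $(\le k)$-vertex induced subgraph of $G$; charging each unproductive edge to such a witnessing path, and using that the $\le k-1$ edges of the path itself are productive, I would aim to show at most $\zeta_k-(k-1)$ unproductive edges are charged to each of the (essentially $n$) relevant paths, for a total of at most $(\zeta_k-k+1)n$. Summing, $|E(G)| \le \tfrac{(k-1)n}{2} + (\zeta_k-k+1)n = n\bigl(\zeta_k - \tfrac{k-1}{2}\bigr)$, i.e.\ $d = 2|E(G)|/n \le 2\zeta_k - k + 1$, as desired. (Connectedness of $G$ is only a bookkeeping convenience, tying the count to $n=|V(G)|$.)

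The main obstacle is the charging of unproductive edges. The witnessing paths $P(w)$ change over time — a vertex may be re-parented onto a longer increasing path — so one cannot simply fix the paths at the end; one must argue that each unproductive edge is absorbed by a $(\le k)$-vertex witnessing path at the moment it is revealed and that no path is overcharged as the configuration evolves. This is exactly where the Lavrov--Loh modification of Friedgut's pedestrian algorithm (which tracks witnessing \emph{paths}, not merely walks) does the real work, and making this accounting tight enough to yield the constants above is the delicate part. A secondary but genuinely fiddly point is the productive case: when $\ell(u)=\ell(v)$ and the no-repeat constraint blocks the update on one side, verifying the ``$\Phi$ increases by at least $2$'' claim may require choosing the stored witnessing path with some care.
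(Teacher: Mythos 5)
Your target inequality is exactly right---you correctly see that the black-box form of Lemma~\ref{sets} only yields $d \le 2\zeta_k$, and your final arithmetic $|E(G)| \le n\left(\zeta_k - \tfrac{k-1}{2}\right)$ is precisely the bound the paper proves. But the mechanism you propose is not the pedestrian algorithm, and it has a genuine hole that you yourself flag. You maintain, for each vertex $v$, the length $\ell(v)$ of the longest increasing path ending at $v$ together with one stored witness $P(v)$, and you need every productive edge to raise $\Phi=\sum_v \ell(v)$ by at least $2$. This is false: take $\ell(u)=\ell(v)=3$ where the unique length-$3$ increasing path ending at $v$ passes through $u$, while $P(u)$ avoids $v$. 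The edge $uv$ is productive ($\ell(v)$ rises to $4$ by extending $P(u)$), but any length-$4$ path ending at $u$ through $uv$ would need a length-$3$ path ending at $v$ avoiding $u$, which does not exist, so $\Phi$ rises by only $1$. No careful choice of stored witness repairs this, since you cannot anticipate which edges arrive later; and losing that factor of $2$ gives $(k-1)n$ rather than $(k-1)n/2$ productive edges, destroying the constant. Your second acknowledged gap---charging unproductive edges to witnessing paths that evolve over time without overcharging any single $(\le k)$-vertex set---is also real and not routine bookkeeping.

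The paper sidesteps both problems by running the actual pedestrian algorithm: $n$ pedestrians, one starting at each vertex, each tracing a single increasing path, with a bijection between pedestrians and vertices maintained throughout. All accounting is then done statically on the final paths $P_1,\dots,P_n$: every edge lying on some traversed path lies on exactly two of them (the two pedestrians who swapped across it) and hence in exactly two induced edge sets $U_i$, contributing $2 - \tfrac{2}{2} = 1$ to $\sum_i\left(|U_i| - \tfrac{|E_i|}{2}\right)$, while every other edge lies in at least one $U_i$ because it was blocked by a pedestrian who had already visited both endpoints. This gives $|E(G)| \le \sum_i \left(|U_i| - \tfrac{|E_i|}{2}\right)$ in one stroke, and each summand is at most $\zeta_k - \tfrac{k-1}{2}$ using $|E_i|=|V_i|-1$ together with the monotonicity of $\zeta_m - \tfrac{m}{2}$; this last step is where connectedness is actually used, so it is not merely a ``bookkeeping convenience.'' I would replace your potential-function scheme with this path-system accounting.
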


In particular, if $k \leq \sqrt d$ we have $2\zeta_k-k+1 \leq 2\zeta_k \leq 2 {k \choose 2} <  d$. This gives another proof of the following result, which was first proved inductively by 
R\"odl in~\cite{R}.

\begin{corollary}\cite{R} \label{cor:rodl} If $G$ is any graph with average degree $d$,
\[
f(G) \geq \sqrt{d}.
\]
\end{corollary}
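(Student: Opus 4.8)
The plan is to deduce the corollary from Lemma~\ref{upperBound} after two short preparatory steps: reducing to connected graphs, and choosing the integer $k$ sharply enough to land on $\sqrt d$ rather than only $\lfloor\sqrt d\rfloor$.

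\textbf{Reduction to the connected case.} First I would record that $f$ is monotone under passing to subgraphs: if $H\subseteq G$, then any edge-ordering $\phi$ of $G$ induces (after relabeling the $\phi$-values on $E(H)$ so as to preserve their relative order) an edge-ordering $\phi_H$ of $H$, and every increasing path of $(H,\phi_H)$ is an increasing path of $(G,\phi)$; taking minima over edge-orderings gives $f(G)\ge f(H)$. Next, if $G_1,\dots,G_m$ are the components of $G$, with average degrees $d_1,\dots,d_m$, an averaging argument (the mediant inequality)
\[
d=\frac{\sum_i 2e(G_i)}{\sum_i |V(G_i)|}\;\le\;\max_i d_i
\]
produces a component $G'$ with average degree $d'\ge d$. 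It thus suffices to prove $f(G')\ge\sqrt d$ for the connected graph $G'$, since $f(G)\ge f(G')$.

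\textbf{Applying the lemma.} We may assume $d>0$, since otherwise $G'$ has no edge and there is nothing to prove. Put $k:=\lceil\sqrt d\,\rceil$. A $k$-vertex subgraph has at most ${k\choose 2}$ edges, so the quantity $\zeta_k$ computed for $G'$ satisfies $\zeta_k\le{k\choose 2}$, whence
\[
2\zeta_k-k+1\;\le\;2{k\choose 2}-k+1\;=\;(k-1)^2\;=\;\bigl(\lceil\sqrt d\,\rceil-1\bigr)^2\;<\;d\;\le\;d',
\]
the strict inequality being just $\lceil x\rceil-1<x$ (which may be squared since $d>0$ forces $k-1\ge 0$). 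Since moreover $k<\sqrt{d'}+1\le|V(G')|$, the graph $G'$ and the integer $k$ satisfy the hypotheses of Lemma~\ref{upperBound}, which then gives $f(G)\ge f(G')\ge k=\lceil\sqrt d\,\rceil\ge\sqrt d$.

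\textbf{The delicate point.} Since Lemma~\ref{upperBound} carries the real weight, I do not foresee a genuine obstacle. The one step needing attention is keeping the term ``$-k+1$'' in $2\zeta_k-k+1$: bounding instead by $2{k\choose 2}=k^2-k$ (as in the remark preceding the corollary) would force $k\le\sqrt d$ and deliver only $f(G)\ge\lfloor\sqrt d\rfloor$, whereas the sharper estimate $2\zeta_k-k+1\le(k-1)^2$ is exactly what licenses the choice $k=\lceil\sqrt d\,\rceil$ and hence the clean bound $f(G)\ge\sqrt d$.
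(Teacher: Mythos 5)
Your proof is correct and takes essentially the same route as the paper: the paper derives the corollary in one line from Lemma~\ref{upperBound} via $\zeta_k\le\binom{k}{2}$, writing ``if $k\le\sqrt d$ then $2\zeta_k-k+1\le 2\binom{k}{2}<d$.'' Your two refinements --- passing to a connected component of average degree at least $d$ (which Lemma~\ref{upperBound} formally requires), and retaining the $-k+1$ term so that $k=\lceil\sqrt d\,\rceil$ works and one gets $\sqrt d$ rather than $\lfloor\sqrt d\rfloor$ --- are genuine improvements in care over the paper's one-liner, but they do not change the underlying argument.
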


We will now state our two main theorems.

\begin{restatable}
{theorem}{hypercube}\label{hypercube}Let $Q_d$ denote the $d$-dimensional hypercube. For all $d \ge 2$, \[\frac{d}{\log d} \leq f(Q_d) \leq d.\]
\end{restatable}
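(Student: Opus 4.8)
The upper bound $f(Q_d) \le d$ is immediate from Observation \ref{lowerBound}: the hypercube $Q_d$ is $d$-regular and $d$-edge-colorable (color each edge by the coordinate direction it changes), so $\chi'(Q_d) = d$ and hence $f(Q_d) \le \chi'(Q_d) = d$.

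The substance is the lower bound $f(Q_d) \ge d/\log d$. The natural route is to apply Lemma \ref{upperBound} with $k \approx d/\log d$, which requires bounding $\zeta_k = \max_{|U|=k} |E(Q_d[U])|$ and checking $2\zeta_k - k + 1 < d$ (the average degree of $Q_d$ is $d$). So the crux is an edge-isoperimetric-type estimate: how many edges can $k$ vertices of the hypercube span? I would invoke the classical result (Harper / Bernstein / Hart, or the clean form due to Katona or the ``subcube'' bound) that a set of size $k$ in $Q_d$ spans at most $\tfrac12 k \log_2 k$ edges; equivalently $|E(Q_d[U])| \le \sum_{i} \lfloor \text{something} \rfloor$, but the bound $\zeta_k \le \tfrac12 k \log_2 k$ suffices. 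Then $2\zeta_k - k + 1 \le k\log_2 k - k + 1$, and choosing $k = \lfloor d/\log d \rfloor$ (with $\log$ the natural log, so $\log_2 k = \log k / \log 2 \le \log d/\log 2$) gives $2\zeta_k - k + 1 \le (d/\log d)(\log d/\log 2) = d/\log 2 < d$? That fails — $d/\log 2 > d$ — so one must be more careful with constants: take $k = \lfloor c\, d/\log d\rfloor$ for a suitable small constant absorbed into the statement, or note that $\log_2 k \le \log_2(d/\log d) = \log_2 d - \log_2\log d$, which for large $d$ is comfortably less than $\log_2 d$, and then $k \log_2 k \lesssim (d/\log d)\cdot \log_2 d = d/\log 2 \cdot (1 - \log_2\log d/\log_2 d)$; still this needs the correction term to beat the $1/\log 2 \approx 1.44$ factor. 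The honest fix is that the theorem as stated presumably uses $\log = \log_2$ (or absorbs constants), so with $k = \lfloor d/\log_2 d\rfloor$ one gets $2\zeta_k - k + 1 \le k\log_2 k \le k \log_2 d \le d$, and the strict inequality plus small-order terms needs a touch of care for small $d$; the condition ``for all $d \ge 2$'' will require checking a few base cases by hand or a slightly more generous isoperimetric bound.

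Concretely, the steps are: (1) state and cite the edge-isoperimetric inequality for the hypercube giving $\zeta_k \le \tfrac12 k \log_2 k$; (2) set $k := \lfloor d/\log_2 d \rfloor$ (or the constant the paper's $\log$ convention dictates), so $k \le d/\log_2 d$ and $\log_2 k \le \log_2 d$; (3) compute $2\zeta_k - k + 1 \le k \log_2 k - k + 1 \le k\log_2 d - k + 1 \le d - k + 1 < d$ since $k \ge 2$; (4) confirm $Q_d$ is connected with average degree exactly $d$; (5) apply Lemma \ref{upperBound} to conclude $f(Q_d) \ge k = \lfloor d/\log_2 d\rfloor \ge d/\log d$ for the paper's intended base of the logarithm, handling $d=2,3$ directly if needed.

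The main obstacle is step (1)–(3): pinning down the edge-isoperimetric bound in a form clean enough that the arithmetic in Lemma \ref{upperBound} goes through for all $d \ge 2$ with the stated $\log$, rather than just asymptotically. If the crude bound $\zeta_k \le \binom{k}{2}$ is used instead one only recovers Corollary \ref{cor:rodl}'s $\sqrt d$, so the hypercube's isoperimetry — the fact that small sets are far sparser than cliques — is exactly what powers the improvement from $\sqrt d$ to $d/\log d$, and getting that quantitatively tight is where the real work lies.
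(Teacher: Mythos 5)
Your plan matches the paper's proof: the upper bound comes from Observation \ref{lowerBound} (as you say, $\chi'(Q_d)=d$), the lower bound from Lemma \ref{upperBound} applied with $k\approx d/\log_2 d$ together with Harper's edge-isoperimetric bound $\zeta_k(Q_d)\le \tfrac{1}{2}k\log_2 k$ (the paper's Lemma \ref{hypercube density}), and the paper fixes $\log=\log_2$ throughout that section, so the base-of-logarithm worry you raise evaporates. One small correction to your step (3): to conclude $f(Q_d)\ge d/\log d$ you must take $k=\lceil d/\log d\rceil$ rather than the floor (the floor only yields $f(Q_d)\ge\lfloor d/\log d\rfloor$, which is strictly weaker whenever $d/\log d$ is not an integer), and with the ceiling the clean chain $k\log_2 k\le k\log_2 d\le d$ no longer applies verbatim; the paper instead verifies $2\le d\le 16$ directly and for $d>16$ uses $k<\frac{d}{\log d}+1$ together with $d>(\log d)^2$ to get $k\log k-k+1<d+\log d-\frac{d}{\log d}<d$.
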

\noindent It has been conjectured (cf. \cite{regs}) that $f(Q_d) = d$, which remains open.

Our second theorem relates to the random graph $G(n,p)$, obtained from the complete graph $K_n$ by selecting each edge independently with probability $p$. 

\begin{theorem}\label{gnp}
For any function $\omega(n) \to \infty$ and any $p \leq \frac{\log n}{\sqrt{n}}\omega(n)$, with high probability
\[f(G(n,p)) \geq \frac{(1-o(1))np}{\omega(n)\log n }.\]
\end{theorem}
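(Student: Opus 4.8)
The plan is to apply Lemma~\ref{upperBound} to $G=G(n,p)$, so I need a lower bound on the average degree $d$ of $G(n,p)$ and an upper bound on the quantity $\zeta_k$ from that lemma for a well-chosen $k$. Since $\binom{n}{2}p\to\infty$ in the relevant range, the Chernoff bound gives $d\ge(1-o(1))np$ with high probability, and since $np\gg\log n$ there (otherwise the claimed bound is below $1$), $G(n,p)$ is connected with high probability. I may also assume $np\to\infty$, as otherwise the right-hand side is $o(1)$ and the claim is trivial. Before the main argument I would dispose of small $p$: if $np\le(\omega\log n)^2$, then by Corollary~\ref{cor:rodl} together with the degree bound, with high probability $f(G(n,p))\ge\sqrt d\ge\sqrt{(1-o(1))np}\ge(1-o(1))\frac{np}{\omega\log n}$, the last step using $np\le(\omega\log n)^2$. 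So I may assume $np>(\omega\log n)^2$, i.e.\ $q:=\frac{np}{\omega\log n}>\omega\log n\to\infty$.

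Next I would fix $a_n:=C/\sqrt{\omega(n)}$ for a large absolute constant $C$ (so $a_n\to0$) and set $k:=\floor{(1-a_n)q}$, which equals $(1-o(1))\frac{np}{\omega\log n}$ and tends to infinity. By Lemma~\ref{upperBound} it suffices to show that with high probability $2\zeta_k-k+1<d$. Taking $m:=\floor{(1-o(1))np/2}$ with the same $o(1)$ as in the degree bound, one has $2m\le d$ with high probability, and then $\zeta_k<m$ already forces $2\zeta_k-k+1<d$. Thus everything reduces to proving $\Pr[\zeta_k\ge m]\to0$.

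For this I would run a union bound over $k$-subsets: $\Pr[\zeta_k\ge m]\le\binom{n}{k}\,\Pr[\mathrm{Bin}(\binom{k}{2},p)\ge m]$, where $\binom{n}{k}\le(en/k)^k$, hence $\ln\binom{n}{k}\le2k\ln n=O(np/\omega)=o(np)$. The crucial observation is that the binomial mean $\mu:=\binom{k}{2}p$ satisfies $\mu\le\tfrac{k^2p}{2}\le(1-a_n)^2\rho\cdot\tfrac{np}{2}$, where $\rho:=\frac{np^2}{(\omega\log n)^2}\le1$ since $p\le\frac{\omega\log n}{\sqrt n}$; equivalently $\mu\le(1+o(1))\rho\cdot\tfrac d2$, so the expected edge count of a uniformly random $k$-set is at most about $\rho\cdot\tfrac d2\le\tfrac d2$, i.e.\ at most the threshold of Lemma~\ref{upperBound}. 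If $\rho\le\tfrac1{2e}$ then $m/\mu>e$, so the crude estimate $\binom{\binom{k}{2}}{m}p^m\le(e\mu/m)^m\le c^m$ with a constant $c<1$ already gives $\Pr[\zeta_k\ge m]\le\exp(o(np)-\Omega(np))\to0$. If $\rho>\tfrac1{2e}$ then $p=\Theta(\frac{\omega\log n}{\sqrt n})$, so $\mu$ can be as large as $(1-o(1))\tfrac d2$ and the crude bound fails; but now $kp=(1-o(1))\rho\,\omega\log n\gg\log n$, so I would instead use the sharp Chernoff bound $\Pr[\mathrm{Bin}(\binom{k}{2},p)\ge(1+\epsilon)\mu]\le\exp(-\epsilon^2\mu/3)$ (and the variant with $\epsilon$ in place of $\epsilon^2$ when $\epsilon\ge1$, which is even easier). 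A short computation using $\rho\le1$ shows $\epsilon:=m/\mu-1=\Omega(a_n)$ while $\mu=\Omega(np)$, whence $\epsilon^2\mu=\Omega(C^2np/\omega)$, which overwhelms $\ln\binom{n}{k}=O(np/\omega)$ once $C$ is a sufficiently large absolute constant; again $\Pr[\zeta_k\ge m]\to0$. This yields $f(G(n,p))\ge k=(1-o(1))\frac{np}{\omega\log n}$ with high probability.

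The hard part is the regime $p=\Theta(\frac{\omega\log n}{\sqrt n})$, in particular $\rho$ near $1$: here $k\approx\sqrt n$ and a typical $k$-set already carries $\approx\tfrac d2$ edges, so Lemma~\ref{upperBound} leaves essentially no room, and one must show that the \emph{maximum} number of edges over all $\binom{n}{k}$ vertex $k$-sets exceeds its mean by only $o(d)$ rather than by a constant factor. This is possible only because $\omega(n)\to\infty$ forces $kp\gg\log n$, and squeezing out the full constant $1-o(1)$ (instead of some fixed constant below $1$) is precisely what dictates the choice $a_n=\Theta(1/\sqrt\omega)$; balancing the constants in this regime is where I expect the real work to lie.
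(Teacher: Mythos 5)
Your proof is correct, but it takes a genuinely different route from the paper's. The paper applies Lemma \ref{sets} directly to the random graph: if $f(G(n,p))<k$ then $G(n,p)$ is covered by the cliques on some family of $n$ vertex sets of size $k$, and a union bound over all $\binom{n}{k}^n$ such families, each covering $G(n,p)$ with probability at most $(1-p)^{\binom{n}{2}-n\binom{k}{2}}$, finishes the job in a few lines; the entropy term $nk\log n$ and the term $pn\binom{k}{2}\le(1-\epsilon)^2 np\cdot n/2$ are absorbed by the fixed slack $\epsilon$, with no concentration inequality needed. You instead route everything through Lemma \ref{upperBound}, which forces you to control $\zeta_k(G(n,p))$ itself, and you correctly identify why this is delicate: at $p=\Theta(\omega\log n/\sqrt n)$ the mean edge count $\binom{k}{2}p$ of a $k$-set is already $(1-o(1))d/2$, so you must show that the maximum over all $\binom{n}{k}$ sets exceeds this mean by only $o(d)$. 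Your margin $a_n=\Theta(1/\sqrt{\omega})$ makes the Chernoff exponent $\Theta(C^2np/\omega)$ beat the entropy $\Theta(np/\omega)$ for large $C$, and your side reductions (the trivial range $np=O(1)$, the range $np\le(\omega\log n)^2$ via Corollary \ref{cor:rodl}, and connectivity so that Lemma \ref{upperBound} applies) are all sound. What the paper's approach buys is brevity: the covering event is exponentially rarer than the event that a single $k$-set is dense, so a crude estimate survives even the much larger union. What yours buys is a sharp with-high-probability bound on $\zeta_k(G(n,p))$ as a byproduct, at the cost of a two-case concentration analysis that the paper avoids entirely.
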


\begin{corollary}\label{gnp root n}
For any function $\omega(n)\to\infty$ and any $p \geq \frac{\log n}{\sqrt{n}}\omega(n)$, with high probability
\[f(G(n,p)) \geq (1-o(1))\sqrt{n}.\]
\end{corollary}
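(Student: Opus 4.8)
The plan is to deduce Corollary~\ref{gnp root n} from Theorem~\ref{gnp} by a monotonicity-plus-coupling argument, so that essentially no new work is required.

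The one point that needs an argument is that $f$ is monotone under taking (spanning) subgraphs: if $H$ is a graph with $V(H)=V(G)$ and $E(H)\subseteq E(G)$, then $f(H)\le f(G)$. Indeed, given any edge-ordering $\phi$ of $G$, its restriction to $E(H)$ is, after an order-preserving relabeling, an edge-ordering of $H$, and every increasing path of $H$ under this restricted ordering is also an increasing path of $G$ under $\phi$; hence $\psi(G,\phi)\ge\psi\big(H,\phi|_{E(H)}\big)\ge f(H)$. Taking the minimum over all $\phi$ gives $f(G)\ge f(H)$.

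Next I would invoke the standard monotone coupling of binomial random graphs: for $p'\le p$ there is a coupling in which $G(n,p')\subseteq G(n,p)$ with probability $1$ (sample $G(n,p')$ and then add each remaining non-edge independently to obtain $G(n,p)$). Under this coupling, the previous paragraph gives $f(G(n,p))\ge f(G(n,p'))$ deterministically. Now set
\[
p':=\frac{\log n}{\sqrt{n}}\,\omega(n).
\]
Since $p\ge\frac{\log n}{\sqrt{n}}\omega(n)=p'$, the coupling applies; and since $p'$ satisfies the hypothesis $p'\le\frac{\log n}{\sqrt{n}}\omega(n)$ of Theorem~\ref{gnp} with equality (and with the same function $\omega$), that theorem yields, with high probability,
\[
f(G(n,p'))\ \ge\ \frac{(1-o(1))\,n\,p'}{\omega(n)\log n}\ =\ \frac{(1-o(1))\,n\cdot\tfrac{\log n}{\sqrt{n}}\,\omega(n)}{\omega(n)\log n}\ =\ (1-o(1))\sqrt{n}.
\]
Combining the two displays shows that, with high probability, $f(G(n,p))\ge f(G(n,p'))\ge(1-o(1))\sqrt{n}$, which is the claim.

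There is no genuine obstacle here; the only thing to watch is that the same slowly growing $\omega$ may be used both to define $p'$ and to apply Theorem~\ref{gnp}, which is fine since the hypothesis of that theorem is met with equality at $p=p'$. (If one preferred, one could instead fix any auxiliary $\omega'\le\omega$ with $\omega'(n)\to\infty$ and take $p'=\frac{\log n}{\sqrt{n}}\omega'(n)$, but this refinement is unnecessary.)
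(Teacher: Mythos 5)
Your proposal is correct and follows essentially the same route as the paper: monotonicity of $f$ under subgraphs combined with the standard coupling reduces to the boundary case $p'=\frac{\log n}{\sqrt n}\omega(n)$, where Theorem~\ref{gnp} gives exactly $(1-o(1))\sqrt n$. You merely spell out the monotonicity and coupling steps that the paper leaves implicit.
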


Noting Graham and Kleitman's lower bound in \eqref{graham and kleitman bound}, we see that graphs far sparser than $K_n$ obtain the same best known bound asymptotically.

In Section \ref{sec:lemmas}, we state the pedestrian algorithm and use it to prove Lemmas \ref{sets} and \ref{upperBound}. In Sections \ref{sec:hc} and \ref{sec:rg}, Theorems \ref{hypercube} and \ref{gnp} are proved respectively.

\section{Proofs of Lemmas \ref{sets} and \ref{upperBound}} \label{sec:lemmas}

We begin this section by stating the \emph{pedestrian algorithm} as presented in \cite{LL}.

\noindent \textbf{Pedestrian Algorithm:}

\emph{Input}: A graph $G$ and an edge-ordering $\phi$.

\emph{Algorithm: }
\begin{enumerate}
\item Place a distinct marker (pedestrian) on each vertex of $G$.
\item Consider the edges in the order given by $\phi$. When an edge $e$ is considered, the pedestrians currently at the vertices incident to $e$ switch places if and only if the switch does not cause either pedestrian to move to a vertex it has already traversed.
\end{enumerate}

Note that at every step in the algorithm there is exactly one pedestrian on each vertex. Also note that each pedestrian traverses an increasing path.

To make use of the pedestrian argument, we find it convenient to introduce the following notation. For a path $P_i$, denote the edge set of the path by $E_i $, the vertex set by $V_i$, and the edges induced by $V_i$ by $U_i$.

\vspace{.3cm} 

\noindent \textbf{Proof of Lemma \ref{sets}:}
 
Take $\phi$ to be an edge-ordering of $G$ that establishes $f(G)<k$. Let $\{p_i\}_{i=1}^n$ be the set of pedestrians. Let $\{P_i\}_{i=1}^n$  to be the increasing paths traversed by the respective pedestrians  $\{p_i\}_{i=1}^n$. This yields the corresponding sets  $\{E_i\}_{i=1}^n$,  $\{V_i\}_{i=1}^n$, and  $\{U_i\}_{i=1}^n$. Since $\phi$ does not have an increasing path of length $k$, we have $|E_i|<k$ for all $i \in [n]$. Clearly $|V_i| = |E_i| + 1 \leq k$.

To prove that $E(G) \subseteq \cup_{i=1}^n U_i$, consider any edge $e \in E(G)$. Let $p_i$ and $p_j$ be the pedestrians located at the vertices incident to $e$ when the edge was considered. Either $p_i$ and $p_j$ switched places so that $e \in U_i$ and $e \in U_j$ or, without loss of generality, $p_i$ had already visited both vertices incident to edge $e$ implying that $e \in U_i$. This completes the proof of Lemma \ref{sets}. \hfill \qed

\vspace{.3cm}
\noindent  \textbf{Proof of Lemma \ref{upperBound}:}
Recall that for a graph $G$ we defined $d$ to be the average degree and $ \zeta_k = \max_{U \in {V (G)\choose k}} |E(G[U])|$. To establish that $2\zeta_k-k+1 < d $ implies $f(G) \geq k$ we will prove the contrapositive by arguing that $f(G) < k$ implies $2\zeta_k-k+1 \geq d $. As before, take paths $\{P_i\}_{i=1}^n$ corresponding to the pedestrian argument applied to an edge ordering $\phi$ that establishes $f(G)<k$. We will now show that 
\begin{align}
|E(G)| \leq \sum_{i=1}^n \left( |U_i| - \frac{|E_i|}{2} \right).  \label{claimInUpper}
\end{align}
Indeed, observe that if $e \not \in E_i$ for every $i \in [n]$, then the edge $e$ contributes at least one to the sum since it is in at least one $U_i$ (and at most 2). Otherwise if $e \in E_i$ for some $i \in [n]$, it must be the case that $e \in E_j$ for exactly one other distinct $j \in [n]$; this is because $e \in E_i$ corresponds to two pedestrians switching places when $e$ was activated. Thus if $e \in E_i$ for some $i \in [n]$, $e$ contributes exactly one to the sum in \eqref{claimInUpper} as $e$ is in precisely two sets in $\{U_i\}_{i=1}^n$ and two sets in $\{E_i\}_{i=1}^n$. This establishes \eqref{claimInUpper}.

We now claim that for each $i \in [n]$,
\begin{align}
|U_i| - \frac{|E_i|}{2}  \leq  \zeta_{|V_i|} - \frac{|V_i|-1}{2} \leq \zeta_{k} - \frac{k-1}{2}. \label{claimInUpper2}
\end{align}

The first inequality in \eqref{claimInUpper2} is an immediate consequence of the fact that for the path $P_i$, $|E_i| = |V_i|-1$, the edges in $U_i$ span exactly $|V_i|$ vertices, and $\zeta_{|V_i|}$ is defined to be the maximum number of edges induced by $|V_i|$ vertices. The second inequality follows from two facts. First, $|V_i| \leq k$ since by construction each $P_i$ had length less than $k$. Second, for all $m < |V(G)|$ it is the case that $\zeta_m - \frac{m}{2} \leq \zeta_{m+1} - \frac{m+1}{2}$; i.e., connectivity implies every set $M$ of size $m<n$ establishing $|\zeta_m|=|E(G[M])|$ can augmented by adding one adjacent vertex to form a set with at least one additional edge.

In conjunction, \eqref{claimInUpper} and \eqref{claimInUpper2} yield

\begin{align}
|E(G)| \leq n \left( \zeta_{k} - \frac{k-1}{2} \right) .
\label{claimInUpper3}
\end{align}

Multiplying both sides of \eqref{claimInUpper3} by $\frac{2}{n}$ establishes $d \leq 2\zeta_k-k+1$. This completes the proof of Lemma \ref{upperBound}. \hfill \qed

\section{The Hypercube} \label{sec:hc}
In this section, we will use Lemma 1 to prove the upper bound for $f(Q_d)$. All logarithms presented in this section are base $2$. Recall that 
\[\displaystyle \zeta_k(G) = \max_{U \in {V(G) \choose k}} |E(G[U])|.\]
 The following lemma is a corollary of a result in \cite{H} (see the theorem and following discussion on pages 131-132). We provide a simple proof by induction of this result for completeness.

\begin{lemma}\label{hypercube density}\cite{H}
For $k,d  \in \mathbb{Z}^+$, the $d$-dimensional hypercube satisfies
\[\zeta_k(Q_d)\leq \frac{k\log k}{2}.\]
\end{lemma}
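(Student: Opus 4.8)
The plan is to prove $\zeta_k(Q_d) \le \frac{k \log k}{2}$ by induction on $d$ (with $k$ ranging freely), splitting the hypercube $Q_d$ into its two facets $Q_{d-1}$. Write $V(Q_d) = A \cup B$ where $A$ and $B$ are the vertex sets of the two copies of $Q_{d-1}$, and observe that the edges of $Q_d$ are: the edges inside $A$ (a copy of $Q_{d-1}$), the edges inside $B$ (another copy of $Q_{d-1}$), and a perfect matching $M$ between $A$ and $B$. Given $U \subseteq V(Q_d)$ with $|U| = k$, let $U_A = U \cap A$ and $U_B = U \cap B$ with $|U_A| = a$, $|U_B| = b$, $a + b = k$. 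Then $|E(Q_d[U])| \le |E(Q_{d-1}[U_A])| + |E(Q_{d-1}[U_B])| + |M \cap (U_A \times U_B)| \le \zeta_a(Q_{d-1}) + \zeta_b(Q_{d-1}) + \min(a,b)$.

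Next I would apply the induction hypothesis to the two facets, giving $|E(Q_d[U])| \le \frac{a \log a}{2} + \frac{b \log b}{2} + \min(a,b)$, so it suffices to prove the purely numerical inequality
\begin{equation*}
\frac{a \log a}{2} + \frac{b \log b}{2} + \min(a,b) \le \frac{(a+b)\log(a+b)}{2}
\end{equation*}
for all positive integers $a, b$ (with the convention $0 \log 0 = 0$ to handle $a = 0$ or $b = 0$, which are the base-type cases and are trivial). Assuming WLOG $a \le b$, this rearranges to $2a \le (a+b)\log(a+b) - a\log a - b \log b$. The base case of the induction is $d = 1$: here $Q_1$ is a single edge, so $\zeta_k(Q_1)$ is $1$ if $k \ge 2$ and $0$ if $k \le 1$, and $\frac{k\log k}{2} \ge 1$ for $k \ge 2$, so this holds. (One also checks small $k$ directly; for $k=1$ both sides are $0$.)

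The main obstacle will be verifying the numerical inequality $\frac{a \log a}{2} + \frac{b \log b}{2} + \min(a,b) \le \frac{(a+b)\log(a+b)}{2}$. The cleanest route is probably to treat it via convexity: the function $x \mapsto x \log x$ is convex, but that gives the wrong direction, so instead I would fix $s = a + b$ and analyze $g(a) = s \log s - a \log a - (s-a)\log(s-a) - 2\min(a, s-a)$ as a function of $a \in [0, s/2]$, showing $g(a) \ge 0$. At $a = 0$, $g(0) = 0$; at $a = s/2$, $g(s/2) = s \log s - 2 \cdot \frac{s}{2}\log\frac{s}{2} - s = s\log s - s(\log s - 1) - s = 0$. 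So it suffices to show $g$ has no interior minimum below zero — e.g. show $g$ is concave on $(0, s/2)$ (since $\min(a, s-a) = a$ there), which follows from the second derivative: $g''(a) = -\frac{1}{a \ln 2} - \frac{1}{(s-a)\ln 2} < 0$. A nonnegative concave function on an interval with nonnegative endpoint values (in fact zero at both endpoints here) is nonnegative throughout, completing the argument. A minor point to handle carefully is the passage between real-analytic reasoning and the integer values of $a, b$, but since the inequality is proved for all real $a \in [0, s/2]$ this is automatic.
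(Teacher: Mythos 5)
Your proposal is correct and follows essentially the same route as the paper: the same decomposition of $Q_d$ into two facets joined by a matching, the same inductive bound $\zeta_k(Q_d)\le \zeta_a(Q_{d-1})+\zeta_b(Q_{d-1})+\min(a,b)$, and the same second-derivative argument (the paper shows its function $j\mapsto \tfrac{j\log j}{2}+\tfrac{(k-j)\log(k-j)}{2}+j$ is convex on $[0,k/2]$ with both endpoint values equal to $\tfrac{k\log k}{2}$, which is exactly the negation of your concavity argument). No gaps.
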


\begin{proof}
We induct on $d$. For $d=1$ we consider two cases: when $k=1$, $m_1(Q_1) = 0 =  \frac{1\log 1}{2}$ and for $k = 2$, $m_k(Q_1)=1 \leq \frac{k\log k}{2}$.

For $d>1$, consider any $S\subset V(Q_d)$ with $|S| \leq k$. Viewing $Q_d$ as two disjoint copies of $Q_{d-1}$ which are connected by a matching, assume $S$ has $j$ vertices in the first copy of $Q_{d-1}$ and $|S| - j$ vertices in the second. Thus $S$ can induce at most $\mathrm{min}\{ j, |S| - j\}$ edges in the matching. Therefore,
\begin{align*}
\zeta_k(Q_d)&\leq\max_{0\leq j\leq \frac{k}{2}} \left\{ \zeta_j(Q_{d-1})+\zeta_{k-j}(Q_{d-1})+j \right\} \\
&\leq \max_{0\leq j\leq\frac{k}{2}} \left\{ \frac{j\log j}{2}+\frac{(k-j)\log(k-j)}{2}+j  \right\} \quad \text{ (by inductive hypothesis)}
\end{align*}
Now the function $g(j):=\frac{j\log j}{2}+\frac{(k-j)\log(k-j)}{2}+j$ satisfies $g''(j)>0$ on the interval $\left[0,\frac{k}{2}\right]$. Hence its maximum occurs at one of the endpoints. Notice $g(0)=g\left(\frac{k}{2}\right)=\frac{k\log k}{2}$, establishing $\zeta_k(Q_d)\leq \frac{k\log k}{2}$.
\end{proof}

\noindent \textbf{Proof of Theorem \ref{hypercube}:} The upper bound follows from Observation \ref{lowerBound}. 

To prove the lower bound if $ 2 \le d \leq 4$, observe that $\frac{d}{\log d} \leq 2$, so the lower bound claims that an increasing path of length two must exist in every edge-ordering. This is readily obtained in any edge-ordering by considering any two incident edges.
  
We now consider the remaining case when $d > 4$. 
Let $k=\left\lceil\frac{d}{\log d}\right\rceil$. By Lemma \ref{hypercube density}

\begin{equation}\label{thm2hcd}
2\zeta_k(Q_d) - k +1 \leq k\log k - k + 1.
\end{equation}

We claim
\begin{equation}\label{using lemma on Q}
k\log k - k +1 < d.
\end{equation}
Hence \eqref{thm2hcd} and \eqref{using lemma on Q} give $2\zeta_k(Q_d) - k +1 < d$, which by Lemma \ref{upperBound} yields $f(Q_d)\geq k$. Thus to prove the theorem it remains only to verify  \eqref{using lemma on Q} for $d \geq 5$.

If $5\leq d\leq 9$, then $k=3$ and if $10\leq d\leq 16$ then $k=4$, and one can check directly that \eqref{using lemma on Q} holds in either case. Finally if $d> 16$,
\begin{align*}
k\log k - k + 1 & = k (\log k - 1) + 1 \\
& < \left( \frac{d}{\log d}+1 \right) \left(\log d -1 \right) + 1 
&& ( \text{since } d > k \text{ for } d \geq 2 )   \\
& = d + \log d - \frac{d}{\log d} \\
& < d && ( \text{since } d > (\log d)^2 \text{ for } d > 16).
\end{align*}
\hfill \qed

\section{Random Graphs} \label{sec:rg}

This section contains results on the parameter $f$ for the random graph $G(n,p)$. As is common, for convenience we omit any floor and ceiling functions which do not affect the asymptotic nature of our argument.

\vspace{.3cm}
\noindent \textbf{Proof of Theorem \ref{gnp}:}  Consider any fixed function $\omega(n) \to \infty$, any function $p \leq \frac{\log n}{\sqrt{n}}\omega(n)$, and any $\epsilon >0$ that does not depend on $n$. To prove the theorem, we will show that the probability that $G(n,p)$ contains an increasing path of length $k=\frac{(1-\epsilon)np}{\omega(n) \log n}$ approaches 1 as $n \to \infty$. 

Towards this end, let $\mathcal{X}$ be the set of all $n$ element subsets of ${[n] \choose k}$; i.e. $\{V_i\}_{i=1}^{n} \in \mathcal{X}$ if $V_i \subset [n]$ and $|V_i| = k$ for all $i \in [n]$. Clearly each $\cup_{i=1}^{n} K_n[V_i]$ contains at most $n {k \choose 2}$ edges. Hence for any fixed $\{V_i\}_{i=1}^{n} \in \mathcal{X}$, the probability that $G(n,p) \subseteq \cup_{i=1}^{n} K_n[V_i]$ is at most 
\begin{align}
(1-p)^{{n \choose 2}-n {k \choose 2}}. \label{gnpLine}
\end{align}
Now let $\mathcal{B}$ be the set of all (bad) graphs that do not contain an increasing path of length $k$. If $B \in \mathcal{B}$, by Lemma \ref{sets} there exist sets $\{V_i\}_{i=1}^{n} \in \mathcal{X}$ such that 
\[
B \subseteq \bigcup_{i=1}^{n} B[V_i] \subseteq \bigcup_{i=1}^{n} K_n[V_i].\]

It follows from this fact and the union bound that
\begin{align*}
\mathbb{P}\Big(G(n,p) \in \mathcal{B} \Big) &\leq \mathbb{P}\Big(\exists \{V_i\}_{i=1}^n \in \mathcal{X}: G(n,p) \subseteq \bigcup_{i=1}^{n} K_n[V_i] \Big) \\
&\leq \sum_{\{V_i\}_{i=1}^n \in \mathcal{X}} \mathbb{P}\Big( G(n,p) \subseteq \bigcup_{i=1}^{n} K_n[V_i] \Big) \\
& \leq {n \choose k}^n(1-p)^{{n \choose 2}-n {k \choose 2}} \quad \quad \text{(by \eqref{gnpLine}) }\\
&\leq n^{kn}\exp\left\{-p\left({n \choose 2}-n{k \choose 2}\right)\right\} \\
&\leq \exp \Big\{ nk\log n - p{n \choose 2} + pn{k \choose 2} \Big\} \\
&= \exp \Big\{ n \Big( k \log n - \frac{p(n-1)}{2}+p{k \choose 2} \Big) \Big\},
\end{align*}
which approaches zero if $k \log n +p{k \choose 2}  < \frac{p(n-1)}{2}$. By substituting first for $k$ and then for $p$, for $n$ large enough
\begin{align*}
k \log n +p{k \choose 2} &\leq \frac{(1-\epsilon)np}{\omega(n)} + \frac{(1-\epsilon)^2n^2p^3}{2 (\omega(n))^2 (\log n)^2} \\
&\leq \frac{(1-\epsilon)np}{\omega(n)} + \frac{(1-\epsilon)^2 n p}{2} < \frac{p(n-1)}{2},
\end{align*}
which establishes Theorem \ref{gnp}. \hfill \qed

\vspace{.3cm}
\noindent  \textbf{Proof of Corollary \ref{gnp root n}:}
Note that because the function $f$ is monotone with respect to subgraphs, if $f(G(n, p_1)) \geq g(n)$ with high probability for some function $g$, then $f(G(n, p_2)) \geq g(n)$ with high probability for any $p_2\geq p_1$. Therefore, it suffices to consider $p=\frac{\log n}{\sqrt{n}} \omega(n)$ for $\omega(n)\to \infty$ arbitrarily slowly. The lower bound $f(G(n,p)) \geq (1-o(1)) \sqrt{n}$ now follows from Theorem \ref{gnp}.
 \hfill \qed
 \medskip

We remark here that Theorem \ref{gnp} is tight up to a logarithmic factor for many values of $p$. To see this, a standard application of the Chernoff and union bounds (cf. \cite{AS}) gives that the maximum degree of $G(n,p)$ is bounded above by $np(1+o(1))$ with high probability for any $p\geq \frac{\log n}{n}\omega(n)$. Observation \ref{lowerBound} and Theorem \ref{gnp} give that for $\frac{\log n}{n}\omega(n) \leq p \leq \frac{\log n}{\sqrt{n}}\omega(n)$,
\[
\frac{(1+o(1))np}{\omega(n)\log n} \leq f(G(n,p)) \leq (1+o(1))np
\]
with high probability.


\begin{thebibliography}{99}
\bibitem{A}
N.\ Alon,
Problems and results in extremal combinatorics--I,
{\em Discrete Math.} 273 (2003) 31--53.

\bibitem{AS}
N.\ Alon, J.\ Spencer,
{\em The Probabilistic Method},
Wiley, New York (2000).


\bibitem{CCS}
A.\ R.\ Calderbank, F.\ R.\ K.\ Chung, D.\ G.\ Sturtevant,
Increasing sequences with nonzero block sums and increasing paths in edge-ordered graphs,
{\em Discrete Math.} 50 (1984) 15--28.


\bibitem{CK}
V.\ Chv\'{a}tal, J.\ Koml\'{o}s,
Some combinatorial theorems on monotonicity,
{\em Canad. Math. Bull} 14 (1971).

\bibitem{GK} R.\ L.\ Graham, D.\ J.\ Kleitman,
Increasing paths in edge-ordered graphs,
{\em Per. Math. Hung.} 3 (1973) 141--148.

\bibitem{H}
L.\ H.\ Harper,
Optimal assignments of numbers to vertices,
{\em SIAM Journal on Applied mathematics} 12 (1964) 131--135.

\bibitem{KS}
J.\ Katreni\v{c}, G.\ Semani\v{s},
Finding monotone paths in edge-ordered graphs,
{\em Discrete Applied Mathematics} 158 (2010) 1624--1632.



\bibitem{LL}
M.\ Lavrov, P.\ Loh,
Hamiltonian increasing paths in random edge orderings,
preprint \texttt{arXiv:1403.0948} (2014+).




\bibitem{RSY}
Y.\ Roditty, B.\ Shoham, R.\ Yuster,
Monotone paths in edge-ordered sparse graphs,
{\em Discrete Math.} 226 (2001) 411--417.

\bibitem{R} V.\ R\"odl, Masters Thesis, Charles University 1973.

\bibitem{W} P.\ Winkler,
Puzzled: Solutions and sources,
{\em Commun. ACM} 51(9) (2008) 103--103.

\bibitem{Y}
R.\ Yuster,
Large monotone paths in graphs with bounded degree,
{\em Graphs Combin.} 17 (2001) 579--587.


\bibitem{regs}
{\texttt http://www.math.illinois.edu/$\sim$dwest/regs/increasing.html}

\end{thebibliography}
\end{document}